\documentclass[a4paper,12pt]{article}
\usepackage{cmap}                        
\usepackage[cp1251]{inputenc}            
\usepackage[english]{babel}
\usepackage[left=2cm,right=2cm,top=2cm,bottom=2cm]{geometry} 
\usepackage{amssymb}
\usepackage{amsmath, amsthm}
\theoremstyle{plain}
\newtheorem{theorem}{Theorem}
\newtheorem{lemma}{Lemma}

\newtheorem{corollary}{Corollary}

\theoremstyle{definition}
\newtheorem{example}{Example}
\newtheorem{remark}{Remark}

\newtheorem{pr}{Problem}

\begin{document}

\begin{center}\Large
\textbf{On the quasi-$\mathfrak{F}$-hypercenter of a finite group}
\normalsize

\smallskip
V.\,I. Murashka

 \{mvimath@yandex.ru\}

 Francisk Skorina Gomel State University, Gomel\end{center}

\textbf{Abstract.} In   this paper some properties of the $\mathfrak{F}^*$-hypercenter of a finite group are studied where $\mathfrak{F}^*$ is the class of all finite quasi-$\mathfrak{F}$-groups for a hereditary  saturated formation $\mathfrak{F}$ of finite groups. In particular, it is   shown that the quasinilpotent hypercenter of a finite group coincides with the intersection of all maximal quasinilpotent subgroups.


 \textbf{Keywords.} Finite groups; quasinilpotent groups; quasi-$\mathfrak{F}$-groups; hereditary saturated formation; hypercenter of a group.

\textbf{AMS}(2010). 20D25,  20F17,   20F19.

\section*{Introduction and the results}

All considered groups are finite. In \cite{h1} R. Baer showed that  the hypercenter $\mathrm{Z}_\infty(G)$ of a  group $G$ coincides with the intersection of all maximal nilpotent subgroups of $G$.
The concept of hypercenter was extended on classes of groups (see \cite[p. 127--128]{s6} or \cite[1, 2.2]{s5}). Let $\mathfrak{X}$ be a class of groups. A chief factor $H/K$ of a group $G$ is called   $\mathfrak{X}$-central if $(H/K)\leftthreetimes G/C_G(H/K)\in\mathfrak{X}$ otherwise it is called $\mathfrak{X}$-eccentric. A normal subgroup $N$ of $G$ is said to be $\mathfrak{X}$-hypercentral in $G$ if $N=1$ or $N\neq 1$ and every chief factor of $G$ below $N$ is $\mathfrak{X}$-central. The $\mathfrak{X}$-hypercenter $\mathrm{Z}_\mathfrak{X}(G)$ is the product of all normal   $\mathfrak{X}$-hypercentral subgroups of $G$. So if $\mathfrak{X}=\mathfrak{N}$ is the class of all nilpotent groups then $\mathrm{Z}_\infty(G)=\mathrm{Z}_\mathfrak{N}(G)$ for every group $G$.



   L.\,A. Shemetkov possed the following problem on the Gomel Algebraic Seminar in 1995: \textquoteleft\textquoteleft Describe all hereditary saturated formations $\mathfrak{F}$  such that  $\mathrm{Int}_\mathfrak{F}(G)=\mathrm{Z}_\mathfrak{F}(G)$ holds for every group $G$\textquoteright\textquoteright. This problem was solved by A.\,N. Skiba in \cite{h4}.
Let $F$ be the canonical local definition of a nonempty local formation $\mathfrak{F}$. Then $\mathfrak{F}$ is said to satisfy the boundary condition \cite{h4} if  $\mathfrak{F}$ contains every  group $G$  whose all maximal subgroups belong to $F(p)$ for some prime $p$.

\begin{theorem}[{\cite[Theorem A]{h4}}]\label{t0} Let $\mathfrak{F}$ be a hereditary saturated formation.  Then  $\mathrm{Int}_\mathfrak{F}(G)=\mathrm{Z}_\mathfrak{F}(G)$ holds for every group $G$  if and only if $\mathfrak{F}$ satisfies the boundary condition.  \end{theorem}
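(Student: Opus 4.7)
The plan is to treat the two implications separately, after first establishing the general inclusion $\mathrm{Z}_\mathfrak{F}(G)\subseteq\mathrm{Int}_\mathfrak{F}(G)$, which is valid for any hereditary saturated formation. If $M$ is a maximal $\mathfrak{F}$-subgroup of $G$, then refining a chief series of $\mathrm{Z}_\mathfrak{F}(G)M$ through $M$ and combining the $\mathfrak{F}$-centrality of the factors lying inside $\mathrm{Z}_\mathfrak{F}(G)$ with the local definition $F$ of $\mathfrak{F}$ should yield $\mathrm{Z}_\mathfrak{F}(G)M\in\mathfrak{F}$, so $\mathrm{Z}_\mathfrak{F}(G)\leq M$ by maximality. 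Intersecting over all such $M$ gives the stated inclusion.

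For the sufficient direction I would argue by induction on $|G|$, aiming at $\mathrm{Int}_\mathfrak{F}(G)\subseteq\mathrm{Z}_\mathfrak{F}(G)$. Pick a minimal normal subgroup $N$ of $G$ contained in $\mathrm{Int}_\mathfrak{F}(G)$. Hereditariness of $\mathfrak{F}$ and the compatibility of both operators with quotients allow one to reduce, via $G/N$ and passage to factors, to the case where $N$ is the unique minimal normal subgroup of $G$, an elementary abelian $p$-group, and the goal becomes $G/C_G(N)\in F(p)$. Assuming this fails, the plan is to exhibit a maximal $\mathfrak{F}$-subgroup of $G$ avoiding $N$: produce a complement $M$ of $N$ in $G$ by a Gasch\"{u}tz-type argument, then apply the boundary condition inside $G/C_G(N)$ to show that any maximal overgroup of $M$ not containing $N$ still lies in $\mathfrak{F}$, contradicting $N\leq\mathrm{Int}_\mathfrak{F}(G)$.

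For the necessary direction I would argue contrapositively. Suppose the boundary condition fails: there is a prime $p$ and a group $H\notin\mathfrak{F}$ whose maximal subgroups all lie in $F(p)$. Form $G=V\rtimes H$ with $V$ a faithful irreducible $\mathbb{F}_p[H]$-module. Then $V$ is the unique minimal normal subgroup of $G$, and $G/C_G(V)\cong H\notin\mathfrak{F}$ makes $V$ an $\mathfrak{F}$-eccentric chief factor, so $V\not\leq\mathrm{Z}_\mathfrak{F}(G)$. On the other hand, I would verify that every maximal $\mathfrak{F}$-subgroup $K$ of $G$ contains $V$: if not, $K$ sits inside $V_0\rtimes H_0$ for a proper submodule $V_0<V$ and $H_0\leq H$, and combining hereditariness of $\mathfrak{F}$, the structure of $F$, and the hypothesis that every maximal subgroup of $H$ lies in $F(p)$ allows one to enlarge $K$ to a strictly larger $\mathfrak{F}$-subgroup that does contain $V$, violating maximality. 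Hence $V\leq\mathrm{Int}_\mathfrak{F}(G)\setminus\mathrm{Z}_\mathfrak{F}(G)$, contradicting the assumed equality.

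The main obstacle I anticipate is the sufficient direction: engineering a maximal $\mathfrak{F}$-subgroup of $G$ that misses the distinguished minimal normal subgroup $N$. This step has to convert a purely local-formation hypothesis (the boundary condition on $F(p)$) into a global statement about the lattice of $\mathfrak{F}$-subgroups of $G$, and the natural bridge passes through a delicate analysis of the action of $G$ on $N$ together with complementation theorems of Gasch\"{u}tz type; making that bridge rigorous in full generality is where the real work of Skiba's proof must lie.
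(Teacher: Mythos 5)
This theorem is quoted from Skiba's paper rather than reproved here, but the same machinery is rehearsed inside the paper's proof of Theorem \ref{t1}, so a comparison is possible. Your preliminary inclusion $\mathrm{Z}_\mathfrak{F}(G)\le\mathrm{Int}_\mathfrak{F}(G)$ and your necessity argument follow the standard route (they mirror Lemma \ref{(a)} and the closing paragraphs of the proof of Theorem \ref{t1}). Two repairs are needed in the necessity direction: you must first pass to a \emph{minimal} group $H\notin F(p)$ all of whose maximal subgroups lie in $F(p)$, which yields $\mathrm{O}_p(H)=\Phi(H)=1$ and a unique minimal normal subgroup --- without this the faithful irreducible $\mathbb{F}_pH$-module you invoke need not exist, since $\mathrm{O}_p(H)$ acts trivially on every irreducible module in characteristic $p$; and the claim that every $\mathfrak{F}$-maximal subgroup $K$ of $V\rtimes H$ contains $V$ should be argued via ``if $KV<G$ then $K$ lies in a maximal subgroup containing $V$, which is in $\mathfrak{N}_pF(p)=F(p)\subseteq\mathfrak{F}$; if $KV=G$ then $K\cap V\trianglelefteq G$ forces $K\cap V=1$ and $K\cong H\notin\mathfrak{F}$,'' not via your containment $K\le V_0\rtimes H_0$, since subgroups of $V\rtimes H$ need not have that shape and $V$ has no proper nonzero submodules anyway.

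The genuine gap is the sufficiency direction, exactly where you locate the ``main obstacle.'' Your plan --- assume $G/C_G(N)\notin F(p)$ and contradict $N\le\mathrm{Int}_\mathfrak{F}(G)$ by manufacturing an $\mathfrak{F}$-maximal subgroup avoiding $N$ --- is not how the boundary condition enters, and I do not see how to execute it: $N$ need not be complemented in $G$, and even given a complement $M$ there is no reason an $\mathfrak{F}$-maximal subgroup containing an $\mathfrak{F}$-subgroup of $M$ should avoid $N$; the hypothesis $N\le\mathrm{Int}_\mathfrak{F}(G)$ says precisely that none does, so the contradiction must come from elsewhere. The actual mechanism (visible in steps $(1)$--$(4)$ of the proof of Theorem \ref{t1}) runs in the opposite direction: because a chief factor $H/K$ below $\mathrm{Int}_\mathfrak{F}(G)$ lies inside every $\mathfrak{F}$-maximal subgroup $M$, every $M$-chief factor between $K$ and $H$ is $\mathfrak{F}$-central in $M$, whence $MC_G(H/K)/C_G(H/K)\in\mathfrak{N}_pF(p)=F(p)$; lifting $\mathfrak{F}$-maximal subgroups of $G/C_G(H/K)$ to $\mathfrak{F}$-maximal subgroups of $G$ then shows that \emph{every} $\mathfrak{F}$-subgroup of $G/C_G(H/K)$ lies in $F(p)$; finally, if $G/C_G(H/K)\notin F(p)$ it contains an $s$-critical subgroup for $F(p)$, which the boundary condition places in $\mathfrak{F}$ and hence in $F(p)$, a contradiction. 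This chain is the bridge from the local hypothesis on $F(p)$ to the global statement about $\mathrm{Int}_\mathfrak{F}(G)$, and it is absent from your outline.
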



 The natural generalization of a saturated formation is a solubly saturated formation. Recall that a formation $\mathfrak{F}$ is called solubly saturated if from $G/\Phi(G_\mathfrak{S})\in\mathfrak{F}$ it follows that $G\in\mathfrak{F}$ where $G_\mathfrak{S}$ is the soluble radical of a group $G$. Every saturated formation is solubly saturated. The class $\mathfrak{N}^*$ of all quasinilpotent groups is the example of a solubly saturated and not saturated formation. Hence the following problem seems natural:

       \begin{pr}\label{prob1} Describe all  normally hereditary solubly saturated  formations $\mathfrak{F}$ such that $\mathrm{Int}_\mathfrak{F}(G)=\mathrm{Z}_\mathfrak{F}(G)$ holds for every group $G$.   \end{pr}

\begin{remark}    The solution of Problem \ref{prob1} when $\mathfrak{F}$ is a normally hereditary  saturated  formation is not known at the time of writing. \end{remark}

       Recall \cite[3.4.8]{s8} that every solubly saturated  formation $\mathfrak{F}$ contains the greatest saturated subformation $\mathfrak{F}_l$ with respect to inclusion. A.\,F. Vasil'ev suggested the following problem:

\begin{pr}[A.\,F. Vasil'ev]\label{Vas}
$(1)$ Let $\mathfrak{H}$ be a saturated formation. Assume that  $\mathrm{Int}_\mathfrak{H}(G)=\mathrm{Z}_\mathfrak{H}(G)$ holds for every group $G$. \!Describe all  normally hereditary solubly saturated formations $\mathfrak{F}$ with $\mathfrak{F}_l=\mathfrak{H}$\! such that \!$\mathrm{Int}_\mathfrak{F}(G)=\mathrm{Z}_\mathfrak{F}(G)$ holds for every group $G$.

$(2)$ Let $\mathfrak{F}$ be a normally hereditary solubly saturated formation. Assume that  $\mathrm{Int}_\mathfrak{F}(G)=\mathrm{Z}_\mathfrak{F}(G)$ holds for every group $G$. Does $\mathrm{Int}_{\mathfrak{F}_l}(G)=\mathrm{Z}_{\mathfrak{F}_l}(G)$ hold for every group $G$?
 \end{pr}

Let $\mathfrak{F}$ be a normally hereditary solubly saturated formation. The following example shows that if $\mathrm{Int}_{\mathfrak{F}_l}(G)=\mathrm{Z}_{\mathfrak{F}_l}(G)$ holds for every group $G$ then it is not necessary that  $\mathrm{Int}_\mathfrak{F}(G)=\mathrm{Z}_\mathfrak{F}(G)$ holds for every group $G$.

\begin{example} Recall that $\mathfrak{N}_{ca}$ is the class of groups whose abelian chief factors are central and non-abelin chief factors are simple groups. According to \cite{Ved} $\mathfrak{N}_{ca}$ is a normally hereditary solubly saturated formation. As follows from  \cite[3.4.5]{s8}   the greatest saturated subformation with respect to inclusion of $\mathfrak{N}_{ca}$  is $\mathfrak{N}$. Recall that  $\mathrm{Int}_\mathfrak{N}(G)=\mathrm{Z}_{\mathfrak{N}}(G)$  holds for every group $G$.

Let $G\simeq D_4(2)$ be a Chevalley orthogonal group and $H$ be a $\mathfrak{N}_{ca}$-maximal subgroup of $Aut(G)$. We may assume that $G\simeq Inn(G)$ is a normal subgroup of $Aut(G)$.  Since $G$ is simple and $HG/G\in \mathfrak{N}_{ca}$,   $HG\in\mathfrak{N}_{ca}$ by the definition of $\mathfrak{N}_{ca}$. Hence $HG=H$. So $G$ lies in the intersection of all $\mathfrak{N}_{ca}$-maximal subgroups of $Aut(G)$. From \cite{jm} it follows that $Aut(G)/C_{Aut(G)}(G)\simeq Aut(G)$.  If $G\leq\mathrm{Z}_{\mathfrak{N}_{ca}}(Aut(G))$ then  $G\leftthreetimes (Aut(G)/C_{Aut(G)}(G))\simeq G\leftthreetimes Aut(G)\in\mathfrak{N}_{ca}$. Note that $Out(G)\simeq S_3\not\in\mathfrak{N}_{ca}$ is the quotient group of $G\leftthreetimes Aut(G)$. Therefore $G\leftthreetimes Aut(G)\not\in\mathfrak{N}_{ca}$. Thus  $G\not\leq\mathrm{Z}_{\mathfrak{N}_{ca}}(Aut(G))$ and $\mathrm{Int}_{\mathfrak{N}_{ca}}(Aut(G))\neq\mathrm{Z}_{\mathfrak{N}_{ca}}(Aut(G))$.  \end{example}


In \cite{sk2, sk1} W. Guo and A.\,N. Skiba introduced the concept of quasi-$\mathfrak{F}$-group for a saturated formation $\mathfrak{F}$. Recall that a group $G$ is called quasi-$\mathfrak{F}$-group if for every $\mathfrak{F}$-eccentric chief factor $H/K$ and every $x\in G$, $x$ induces inner automorphism on $H/K$. We use $\mathfrak{F}^*$ to denote the class of all quasi-$\mathfrak{F}$-groups. If $\mathfrak{N}\subseteq\mathfrak{F}$ is a normally hereditary saturated formation then $\mathfrak{F}^*$  is a normally hereditary solubly saturated formation by \cite[Theorem 2.6]{sk2}.

\begin{theorem}\label{t1} Let $\mathfrak{F}$ be a hereditary saturated formation containing all nilpotent groups. Then   $\mathrm{Int}_\mathfrak{F}(G)=\mathrm{Z}_\mathfrak{F}(G)$ holds for every group $G$  if and only if  $\mathrm{Int}_{\mathfrak{F}^*}(G)=\mathrm{Z}_{\mathfrak{F}^*}(G)$ holds for every group $G$.    \end{theorem}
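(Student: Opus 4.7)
My plan is to prove each implication by reducing the ``$\mathfrak{F}$-equality'' statement via Theorem \ref{t0} to the boundary condition on $\mathfrak{F}$, and then using $\mathfrak{N}\subseteq\mathfrak{F}$ to identify abelian $\mathfrak{F}^*$-central chief factors with abelian $\mathfrak{F}$-central ones. Before either direction, I would establish the inclusion $\mathrm{Z}_{\mathfrak{F}^*}(G)\subseteq\mathrm{Int}_{\mathfrak{F}^*}(G)$ by the standard formation-theoretic extension argument: since $\mathfrak{F}^*$ is a normally hereditary solubly saturated formation by \cite[Theorem 2.6]{sk2}, for any $\mathfrak{F}^*$-maximal subgroup $M$ of $G$ the product $M\cdot\mathrm{Z}_{\mathfrak{F}^*}(G)$ lies in $\mathfrak{F}^*$ (its chief factors below $\mathrm{Z}_{\mathfrak{F}^*}(G)$ remain $\mathfrak{F}^*$-central because centralizers inside subgroups are at least as large, and the quotient by $\mathrm{Z}_{\mathfrak{F}^*}(G)$ is a quotient of $M$). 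Thus both equalities reduce to the reverse inclusion $\mathrm{Int}\subseteq\mathrm{Z}$.

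For $(\Rightarrow)$ I would assume $\mathfrak{F}$ satisfies the boundary condition and take a minimal counterexample $G$ to $\mathrm{Int}_{\mathfrak{F}^*}(G)\subseteq\mathrm{Z}_{\mathfrak{F}^*}(G)$, locating a minimal normal subgroup $N\leq\mathrm{Int}_{\mathfrak{F}^*}(G)$ that is not $\mathfrak{F}^*$-central. Since $N$ is characteristically simple it lies in $\mathfrak{F}^*$ (abelian $N$ is nilpotent; non-abelian $N\cong S^k$ is quasi-$\mathfrak{F}$, as every element of $S^k$ acts on $S^k$ by inner automorphisms). If $N$ is abelian then $\mathfrak{F}^*$-central coincides with $\mathfrak{F}$-central, and I would derive a contradiction by showing $N\leq\mathrm{Int}_{\mathfrak{F}}(G)=\mathrm{Z}_{\mathfrak{F}}(G)$: for any $\mathfrak{F}$-maximal $V\leq G$, the product $VN$ lies in $\mathfrak{F}^*$, and applying Theorem \ref{t0} inside $VN$ forces $VN\in\mathfrak{F}$, giving $N\leq V$. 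If $N$ is non-abelian I would show every $x\in G$ acts innerly on $N$ by contradiction: from a non-inner-acting $x$, I would build an $\mathfrak{F}^*$-maximal subgroup of $G$ excluding $x$ but still containing $N$, contradicting $N\leq\mathrm{Int}_{\mathfrak{F}^*}(G)$.

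For $(\Leftarrow)$ I would verify $\mathfrak{F}$'s boundary condition: given a group $G$ whose maximal subgroups all lie in $F(p)$, every proper subgroup of $G$ is in $\mathfrak{F}\subseteq\mathfrak{F}^*$, so $\mathrm{Int}_{\mathfrak{F}^*}(G)=G$ and by hypothesis $\mathrm{Z}_{\mathfrak{F}^*}(G)=G$; then all chief factors of $G$ are $\mathfrak{F}^*$-central, which combined with the $F(p)$-restriction on maximal subgroups forces them to be $\mathfrak{F}$-central and thus $G\in\mathfrak{F}$. The main obstacle I expect is the non-abelian case in $(\Rightarrow)$: engineering an $\mathfrak{F}^*$-maximal subgroup that avoids a prescribed non-inner-acting element $x$ while still containing $N$ is the delicate step, as it requires careful exploitation of the normal heredity of $\mathfrak{F}^*$ together with the structure of $\mathrm{Inn}(N)$ and of the automorphism actions allowed by the quasi-$\mathfrak{F}$ definition.
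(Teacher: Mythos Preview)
Your plan has a genuine gap in the $(\Leftarrow)$ direction. The claim that ``every proper subgroup of $G$ is in $\mathfrak{F}\subseteq\mathfrak{F}^*$, so $\mathrm{Int}_{\mathfrak{F}^*}(G)=G$'' is false: if $G\notin\mathfrak{F}^*$ then the $\mathfrak{F}^*$-maximal subgroups of $G$ are exactly its maximal subgroups, and their intersection is $\Phi(G)$, not $G$. So the hypothesis $\mathrm{Int}_{\mathfrak{F}^*}=\mathrm{Z}_{\mathfrak{F}^*}$ applied to $G$ itself yields nothing useful. The paper's argument is considerably more involved: one takes a \emph{minimal} $s$-critical group $G\notin\mathfrak{F}$ (forcing $\Phi(G)=\mathrm{O}_p(G)=1$ and a unique minimal normal subgroup) and then manufactures an auxiliary group in which the hypothesis visibly fails. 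If $G\notin\mathfrak{F}^*$ one takes a faithful simple $\mathbb{F}_pG$-module $V$ and works in $T=V\rtimes G$; if $G\in\mathfrak{F}^*$ one first shows $G$ must be simple non-abelian and then passes to a Frattini extension $R$ with $R/\Phi(R)\simeq G$ and $\Phi(R)$ a faithful simple $\mathbb{F}_pG$-module. In each case every maximal subgroup of the auxiliary group lies in $F(p)$ while the group itself is not in $\mathfrak{F}^*$, contradicting $\mathrm{Int}_{\mathfrak{F}^*}=\mathrm{Z}_{\mathfrak{F}^*}$ there. Your direct approach on $G$ cannot reach this.

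In the non-abelian case of $(\Rightarrow)$ your proposed contradiction is also miswired: exhibiting an $\mathfrak{F}^*$-maximal subgroup that \emph{contains} $N$ and omits $x$ does not contradict $N\leq\mathrm{Int}_{\mathfrak{F}^*}(G)$. The actual argument is the reverse and is much easier than you fear: since $\mathfrak{N}\subseteq\mathfrak{F}\subseteq\mathfrak{F}^*$, \emph{every} $x\in G$ lies in some $\mathfrak{F}^*$-maximal $M$, and $N\leq M$ by assumption; decomposing $N$ as a product of minimal normal subgroups of $M$ (each $\mathfrak{F}$-eccentric when $N\notin\mathfrak{F}$), the quasi-$\mathfrak{F}$ property of $M$ forces $x$ to act innerly on each piece and hence on $N$. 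Your abelian case is closer in spirit, but ``applying Theorem~\ref{t0} inside $VN$'' is not the mechanism; what one actually shows is that $M/C_M(N)\in F(p)$ for every $\mathfrak{F}^*$-maximal $M$, so by heredity of $F(p)$ every $\mathfrak{F}$-subgroup of $G/C_G(N)$ lies in $F(p)$, and then the boundary condition (Theorem~\ref{t0}) applied to $G/C_G(N)$ gives $G/C_G(N)\in F(p)$.
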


\begin{corollary} The intersection of all maximal quasinilpotent subgroups of a group $G$ is $\mathrm{Z}_{\mathfrak{N}^*}(G)$.  \end{corollary}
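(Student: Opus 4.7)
The plan is to obtain the corollary as an immediate specialization of Theorem~\ref{t1} to the class of all nilpotent groups. First I would take $\mathfrak{F}=\mathfrak{N}$, the class of all finite nilpotent groups, which is a hereditary saturated formation trivially containing all nilpotent groups, so the standing hypothesis on $\mathfrak{F}$ in Theorem~\ref{t1} is satisfied. Next I would verify the precondition that $\mathrm{Int}_\mathfrak{F}(G)=\mathrm{Z}_\mathfrak{F}(G)$ holds for every group: by Baer's theorem cited at the opening of the paper \cite{h1}, $\mathrm{Int}_\mathfrak{N}(G)=\mathrm{Z}_\infty(G)$, and by the discussion following the definition of the $\mathfrak{X}$-hypercenter, $\mathrm{Z}_\infty(G)=\mathrm{Z}_\mathfrak{N}(G)$. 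Thus the hypothesis of Theorem~\ref{t1} holds for $\mathfrak{F}=\mathfrak{N}$.

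Having confirmed the hypothesis, I would then invoke Theorem~\ref{t1} directly to conclude that $\mathrm{Int}_{\mathfrak{N}^*}(G)=\mathrm{Z}_{\mathfrak{N}^*}(G)$ for every finite group $G$. Since $\mathfrak{N}^*$ is by definition the class of all quasi-nilpotent groups, the object $\mathrm{Int}_{\mathfrak{N}^*}(G)$ is, unwinding notation, the intersection of all $\mathfrak{N}^*$-maximal subgroups of $G$, i.e.\ the intersection of all maximal quasinilpotent subgroups. This gives exactly the conclusion of the corollary.

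There is essentially no obstacle to the argument; the corollary is a one-line consequence once Theorem~\ref{t1} has been established. The only care required is to cite Baer's original result correctly and to observe explicitly that the canonical local definition of $\mathfrak{N}$ at every prime $p$ is the class of $p$-groups, so that applying Theorem~\ref{t1} with $\mathfrak{F}=\mathfrak{N}$ does not require anything beyond classical material. The real content of the corollary lies entirely in Theorem~\ref{t1}.
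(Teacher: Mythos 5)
Your proposal is correct and is exactly the intended derivation: specialize Theorem~\ref{t1} to $\mathfrak{F}=\mathfrak{N}$, using Baer's result that $\mathrm{Int}_\mathfrak{N}(G)=\mathrm{Z}_\mathfrak{N}(G)$ for all $G$ to verify the hypothesis. The paper gives no separate proof of the corollary, treating it as this immediate consequence.
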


\begin{remark} Let $\mathfrak{N}\subseteq\mathfrak{F}$ be a hereditary saturated formation. As follows from  \cite[3.4.5]{s8} and \cite[1.3, 3.6]{s5} the greatest saturated subformation with respect to inclusion of $\mathfrak{F}^*$  is $\mathfrak{F}$. \end{remark}

\begin{remark} Note that every $\mathfrak{N}$-central chief factor is central.  From $(4)$ and $(5)$ of  the proof of Theorem \ref{t1} it follows that $\mathrm{Z}_{\mathfrak{N}^*}(G)$ is the greatest normal subgroup of $G$ such that every element of $G$ induces an inner automorphism on every chief factor of $G$  below  $\mathrm{Z}_{\mathfrak{N}^*}(G)$.  \end{remark}


\section{Preliminaries}

The notation and terminology agree with the books \cite{s8, s5}. We refer the reader to these
books for the results on formations. Recall that $\pi(G)$ is the set of all prime divisors of a group $G$,  $\pi(\mathfrak{F})=\underset{G\in\mathfrak{F}}\cup\pi(G)$ and $\mathbb{P}$ is the set of all primes.

Recall that if $\mathfrak{F}$ is a saturated formation, there exists a unique formation function $F$, defining $\mathfrak{F}$, which is integrated ($F(p)\subseteq \mathfrak{F}$ for every $p\in  \mathbb{P}$, the set of all primes)
and full ($F(p) = \mathfrak{N}_p F(p)$ for every $p\in\mathbb{P}$) \cite[IV, 3.8]{s8}. $F$ is called the canonical local definition of $\mathfrak{F}$.

\begin{lemma}[{\cite[1, 1.15]{s5}}]\label{l1} Let $\mathfrak{F}$ be a saturated formation, $F$ be its canonical local definition  and $G$ be a group. Then a chief factor $H/K$ of  $G$ is $\mathfrak{F}$-central if and only if $G/C_G(H/K)\in F(p)$ for all $p\in\pi(H/K)$.
\end{lemma}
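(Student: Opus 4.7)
The proof rests on the standard characterization of membership in $\mathfrak{F}$ via its canonical local definition $F$: a group $L$ lies in $\mathfrak{F}$ if and only if $L/C_L(A/B)\in F(p)$ for every chief factor $A/B$ of $L$ and every $p\in\pi(A/B)$. Since by definition the chief factor $H/K$ is $\mathfrak{F}$-central exactly when $L:=(H/K)\leftthreetimes Q\in\mathfrak{F}$, where $Q:=G/C_G(H/K)$, the plan is to translate the chief-factor condition on $L$ into the required condition on $Q$ alone.

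I would begin by analysing the chief structure of $L=N\rtimes Q$ with $N:=H/K$. Because $H/K$ is a chief factor of $G$, the induced action of $Q$ on $N$ is faithful and irreducible, so $N$ is the unique minimal normal subgroup of $L$; consequently every other chief factor of $L$ lies above $N$ and corresponds to a chief factor of $L/N\simeq Q$, and for such upper chief factors $A/B$ the subgroup $N$ acts trivially, giving $L/C_L(A/B)\simeq Q/C_Q(A/B)$. The centralizer $C_L(N)$ equals $N$ when $N$ is an elementary abelian $p$-group (in which case $\pi(N)=\{p\}$), and equals $1$ when $N$ is non-abelian, since then $Z(N)=1$ and $Q$ acts faithfully on $N$.

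The forward direction now follows directly: $L\in\mathfrak{F}$ forces $L/C_L(N)\in F(p)$ for every $p\in\pi(N)$, which in the abelian case is exactly $Q\in F(p)$, and in the non-abelian case says $L\in F(p)$, whence $Q\simeq L/N\in F(p)$ by quotient-closure of the formation $F(p)$. For the converse, assume $Q\in F(p)$ for every $p\in\pi(N)$. Then $Q\in\mathfrak{F}$ (because $F$ is integrated, so $F(p)\subseteq\mathfrak{F}$), and the chief-factor criterion for the upper chief factors of $L$ reduces to the corresponding criterion for chief factors of $Q$. For the chief factor $N$ itself, the abelian case is the hypothesis.

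The main obstacle is the non-abelian case of the converse, where one must upgrade $Q\in F(p)$ to $L\in F(p)$ for every $p\in\pi(N)$. I would handle this by invoking the structural properties of the canonical local definition recorded in \cite[IV, 3.8]{s8}: $F(p)$ is full, i.e.\ $F(p)=\mathfrak{N}_p F(p)$, and contains every non-abelian simple group $S$ with $p\in\pi(S)$ which arises as a composition factor in some $\mathfrak{F}$-group; combining these with the formation closure of $F(p)$ under direct products allows one to assemble $L=N\rtimes Q$ inside $F(p)$ from the simple direct factors of $N$ and the group $Q$, completing the verification.
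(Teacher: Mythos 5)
The paper does not prove this lemma at all---it is quoted from \cite[1, 1.15]{s5}---so what follows compares your argument against the standard one. Your reduction to the chief-factor criterion for $LF(F)=\mathfrak{F}$ is the right framework, but your structural analysis of $L=N\leftthreetimes Q$ (with $N=H/K$, $Q=G/C_G(H/K)$) is wrong in the non-abelian case, and the error propagates. Since $H\leq G$, the image of $Q$ in $\mathrm{Aut}(N)$ contains $\mathrm{Inn}(N)$; hence for non-abelian $N$ (where $Z(N)=1$) the subgroup $C_L(N)=\{\,nq:\ q\ \text{acts on}\ N\ \text{as conjugation by}\ n^{-1}\,\}$ has order $|N|$ and meets $N$ trivially. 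So $C_L(N)\neq 1$ and $N$ is \emph{not} the unique minimal normal subgroup of $L$; for instance, for a simple non-abelian $G$ and $H/K=G$ one gets $L\simeq (H/K)\times (H/K)$, exactly as the paper itself observes in step $(7)$ of the proof of Theorem \ref{t1}. Consequently your forward argument in the non-abelian case (``$C_L(N)=1$, hence $L\in F(p)$, hence $Q\simeq L/N\in F(p)$'') is not valid as written.

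The same false premise creates your ``main obstacle'' in the converse, and the proposed resolution does not close it. What is needed for the chief factor $N$ of $L$ is $L/C_L(N)\in F(p)$ for $p\in\pi(N)$, not $L\in F(p)$; and because $\mathrm{Inn}(N)$ lies in the image of $Q$ and $Q$ acts faithfully, one has $L/C_L(N)\simeq Q$ in every case (in the abelian case $C_L(N)=N$), so the hypothesis $Q\in F(p)$ settles this factor directly, while the factors of a chief series of $L$ through $N$ lying above $N$ are handled, as you say, by $Q\in F(p)\subseteq\mathfrak{F}$. Your substitute argument instead appeals to a property not contained in \cite[IV, 3.8]{s8} (that $F(p)$ contains every non-abelian simple composition factor, of order divisible by $p$, of an $\mathfrak{F}$-group---and note that nothing in the hypotheses even places the composition factors of $H/K$ in $\mathfrak{F}$, since $H/K$ is an arbitrary chief factor), and then tries to ``assemble'' the semidirect product $L$ from the simple factors of $N$ and the group $Q$ using closure of $F(p)$ under direct products; formations are not closed under semidirect products or extensions, so this step fails even granting the unproved property. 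The non-abelian half of your proof therefore has a genuine gap; the repair is the computation $L/C_L(N)\simeq Q$ above, after which both directions follow at once from the chief-factor characterization of $\mathfrak{F}$ by its canonical local definition.
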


The following lemma directly  follows from   \cite[X, 13.16(a)]{Hup}.

\begin{lemma}\label{l3}
Let a normal subgroup   $N$ of a group  $G$ be a direct product of isomorphic simple non-abelian groups. Then  $N$ is a direct product of minimal normal subgroups of $G$.
\end{lemma}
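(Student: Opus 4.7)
The plan is to exploit the classical description of normal subgroups of a direct product of non-abelian simple groups, which is precisely what the cited Huppert reference delivers. Write $N = S_1 \times \cdots \times S_n$, where each $S_i$ is isomorphic to a fixed non-abelian simple group.

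The essential fact I would take from \cite[X, 13.16(a)]{Hup} is that every normal subgroup of $N$ is a direct product of some sub-collection of the factors $S_i$; in particular, the $S_i$ are exactly the minimal normal subgroups of $N$. Granting this, since $N\triangleleft G$, conjugation by elements of $G$ permutes these minimal normal subgroups, so $G$ acts on the set $\{S_1,\dots,S_n\}$. Let $O_1,\dots,O_k$ be the orbits of this action and set
\[
M_j = \prod_{S_i\in O_j} S_i.
\]
Each $M_j$ is $G$-invariant, hence normal in $G$, and the partition of the index set yields the decomposition $N = M_1\times\cdots\times M_k$.

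To finish I would verify that each $M_j$ is a minimal normal subgroup of $G$. Suppose $1\ne K\triangleleft G$ with $K\le M_j$. Then $K$ is in particular a normal subgroup of $N$, so by the structural fact above $K$ is the direct product of some of the factors $S_i$ contained in $M_j$. Because $K$ is $G$-stable while $G$ acts transitively on the single orbit $O_j$, the corresponding index set is a nonempty union of $G$-orbits inside $O_j$, which forces it to equal $O_j$ and hence $K=M_j$.

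The only real content is the first ingredient: the description of normal subgroups of $N$ as products of subsets of the simple factors. I expect this to be the main (and only) obstacle, but it is exactly what the cited reference in Huppert supplies — after that, the argument reduces to the routine orbit analysis above.
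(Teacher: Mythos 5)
Your argument is correct. The paper gives no proof of this lemma at all --- it simply asserts that the statement directly follows from \cite[X, 13.16(a)]{Hup} --- and your two-step derivation (the classical fact that every normal subgroup of a direct product of non-abelian simple groups is a product of a sub-collection of the factors, followed by grouping those factors into $G$-conjugation orbits and checking that each orbit product is a minimal normal subgroup of $G$) is precisely the standard way to extract the lemma from that reference.
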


\begin{lemma}\label{(a)} Let $\mathfrak{F}$ be a hereditary saturated formation. Then $\mathrm{Z}_{\mathfrak{F}^*}(G)\leq\mathrm{Int}_{\mathfrak{F}^*}(G)$.  \end{lemma}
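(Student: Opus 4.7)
The plan is to show that $Z := \mathrm{Z}_{\mathfrak{F}^*}(G)$ is contained in every $\mathfrak{F}^*$-maximal subgroup $M$ of $G$. Since $Z \trianglelefteq G$, the product $MZ$ is a subgroup containing $M$. If I can establish $MZ \in \mathfrak{F}^*$, then the maximality of $M$ forces $MZ = M$, hence $Z \leq M$; as $M$ is arbitrary, $Z \leq \mathrm{Int}_{\mathfrak{F}^*}(G)$.

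To prove $MZ \in \mathfrak{F}^*$, I would refine the chain $1 \trianglelefteq Z$ to a portion of a chief series of $G$, say $1 = Z_0 < Z_1 < \cdots < Z_n = Z$, where each quotient $Z_i/Z_{i-1}$ is a chief factor of $G$ below $Z$ and therefore $\mathfrak{F}^*$-central in $G$. Then consider the subgroup chain $M = MZ_0 \leq MZ_1 \leq \cdots \leq MZ_n = MZ$, noting $MZ_{i-1} \trianglelefteq MZ_i$ since $Z_{i-1} \trianglelefteq G$. I would proceed by induction on $i$: the base case $MZ_0 = M \in \mathfrak{F}^*$ is given, and the inductive step asks to pass from $MZ_{i-1} \in \mathfrak{F}^*$ to $MZ_i \in \mathfrak{F}^*$. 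The observation $MZ_i/MZ_{i-1} \cong Z_i/(Z_i \cap MZ_{i-1})$ shows this quotient is a $G$-equivariant image of the chief factor $Z_i/Z_{i-1}$, so either it is trivial (and we are done) or it is $G$-isomorphic to $Z_i/Z_{i-1}$.

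The remaining task, which I expect to be the main obstacle, is to show that the chief factors of $MZ_i$ lying between $MZ_{i-1}$ and $MZ_i$ are $\mathfrak{F}^*$-central in $MZ_i$; combined with $MZ_{i-1} \in \mathfrak{F}^*$ and the closure of the formation $\mathfrak{F}^*$ under extensions by $\mathfrak{F}^*$-central chief factors, this yields $MZ_i \in \mathfrak{F}^*$. For the passage of $\mathfrak{F}^*$-centrality from $G$ to the subgroup $MZ_i$, one uses the natural embedding $MZ_i/C_{MZ_i}(H/K) \hookrightarrow G/C_G(H/K)$ for any relevant chief factor $H/K$. When $H/K$ is abelian, $\mathfrak{F}^*$-centrality reduces to $\mathfrak{F}$-centrality and the hereditary property of $\mathfrak{F}$ (via its canonical local definition $F$ together with Lemma~\ref{l1}) transfers it to the subgroup. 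When $H/K$ is non-abelian, every element of $G$ induces an inner automorphism on $H/K$, so every element of $MZ_i$ does too, and this property persists when the factor is refined into smaller $MZ_i$-chief factors (using Lemma~\ref{l3} in the non-abelian case). The delicate bookkeeping will be verifying that chief refinements of $MZ_i$ within $Z_i/Z_{i-1}$ preserve $\mathfrak{F}^*$-centrality, separately handling the abelian and non-abelian regimes because $\mathfrak{F}^*$ is only solubly saturated rather than saturated.
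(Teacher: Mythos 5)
Your overall frame (show $MZ_{\mathfrak{F}^*}(G)\in\mathfrak{F}^*$ for each $\mathfrak{F}^*$-maximal subgroup $M$, then use maximality to get $Z_{\mathfrak{F}^*}(G)\leq M$) is exactly the paper's, but the inductive engine you propose for proving $MZ\in\mathfrak{F}^*$ has a genuine gap. The step from $MZ_{i-1}\in\mathfrak{F}^*$ to $MZ_i\in\mathfrak{F}^*$ invokes ``closure of $\mathfrak{F}^*$ under extensions by $\mathfrak{F}^*$-central chief factors'', i.e.\ the principle that if $Y\trianglelefteq X$ with $Y\in\mathfrak{F}^*$ and every chief factor of $X$ between $Y$ and $X$ is $\mathfrak{F}^*$-central, then $X\in\mathfrak{F}^*$. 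That principle is false: take $\mathfrak{F}=\mathfrak{N}$, $X=S_3$, $Y=C_3$; then $Y\in\mathfrak{N}^*$ and the unique chief factor of $X$ above $Y$ is $X/Y\simeq C_2$, which is central since $X'\leq Y$, yet $S_3\notin\mathfrak{N}^*$ because its chief factor $C_3$ is abelian and non-central. The defect is structural: membership in $\mathfrak{F}^*$ is a condition on the chief factors of the whole group, and $MZ_{i-1}\in\mathfrak{F}^*$ only controls factors under the action of $MZ_{i-1}$; when the acting group grows to $MZ_i$, factors below $MZ_{i-1}$ can become $\mathfrak{F}$-eccentric without the inner-automorphism property, and your induction carries no hypothesis about them. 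A second hole: $MZ_{i-1}\trianglelefteq MZ_i$ does not follow from $Z_{i-1}\trianglelefteq G$; it would require $[M,Z_i]\leq MZ_{i-1}$, which you do not know a priori (it is true only a posteriori, because the lemma itself forces $Z_{\mathfrak{F}^*}(G)\leq M$), so the ``chief factors of $MZ_i$ between $MZ_{i-1}$ and $MZ_i$'' need not even be defined.

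The repair is essentially the paper's argument: set $N=MZ_{\mathfrak{F}^*}(G)$ and note that $N/Z_{\mathfrak{F}^*}(G)\simeq M/(M\cap Z_{\mathfrak{F}^*}(G))\in\mathfrak{F}^*$ as a quotient of $M$, so the chief factors of $N$ above $Z_{\mathfrak{F}^*}(G)$ already satisfy the defining condition of quasi-$\mathfrak{F}$-groups; the only thing to verify is that each chief factor $H/K$ of $N$ below $Z_{\mathfrak{F}^*}(G)$ is either $\mathfrak{F}$-central in $N$ or admits only inner automorphisms from elements of $N$. For that one places $Z_{i-1}\leq K\leq H\leq Z_i$ by Jordan--H\"older and transfers the property of the $G$-chief factor $Z_i/Z_{i-1}$ down to $N$ with precisely the ingredients you listed: hereditariness of the canonical local definition $F(p)$ together with Lemma \ref{l1} in the $\mathfrak{F}$-central case, and restriction of inner automorphisms (plus simplicity of $Z_i/Z_{i-1}$, so that it is itself a chief factor of $N$) in the eccentric case. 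So your tools for transferring centrality to subgroups are the right ones; what must be discarded is the induction up the chain $MZ_i$ and the false extension-closure property it leans on.
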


\begin{proof}

Let $\mathfrak{F}$ be a hereditary saturated formation with the canonical local definition $F$,
$M$ be a    $\mathfrak{F}^*$-maximal subgroup of $G$ and $N=M\mathrm{Z}_{\mathfrak{F}^*}(G)$. Let show that $N\in \mathfrak{F}^*$. It is sufficient to show that for every chief factor $H/K$ of $N$ below $\mathrm{Z}_{\mathfrak{F}^*}(G)$ either $H/K$ is $\mathfrak{F}$-central in $N$ or every $x\in N$ induces an inner automorphism on $H/K$. Let $1=Z_0\triangleleft Z_1\triangleleft\dots\triangleleft Z_n=\mathrm{Z}_{\mathfrak{F}^*}(G)$ be  a chief series of $G$ below $\mathrm{Z}_{\mathfrak{F}^*}(G)$. Then we may assume that $Z_{i-1}\leq K\leq H\leq Z_i$ for some $i$ by Jordan-H\"{o}lder theorem.

If $Z_i/Z_{i-1}$ is an $\mathfrak{F}$-central chief factor of $G$ then $G/C_G(Z_i/Z_{i-1})\in F(p)$ for all $p\in\pi(F(p))$ by Lemma \ref{l1}. Since $F(p)$ is hereditary by \cite[IV, 3.16]{s8}, $NC_G(Z_i/Z_{i-1})/C_G(Z_i/Z_{i-1})\simeq N/C_N(Z_i/Z_{i-1})\in F(p)$ for all $p\in\pi(F(p))$. Note that $N/C_N(H/K)$  is a quotient group of $N/C_N(Z_i/Z_{i-1})$. Thus $H/K$ is  an $\mathfrak{F}$-central chief factor of $N$  by Lemma \ref{l1}.

If $Z_i/Z_{i-1}$ is a $\mathfrak{F}$-eccentric chief factor of $G$ then every element of $G$ induced an inner automorphism on it. It means that  $Z_i/Z_{i-1}$ is a simple group. Hence it is also a chief factor of $N$. From $N\leq G$ it follows that every element of $N$ induces an inner automorphism on $Z_i/Z_{i-1}$.

Thus $N\in\mathfrak{F}^*$. So $N=M\mathrm{Z}_{\mathfrak{F}^*}(G)=M$. Therefore $\mathrm{Z}_{\mathfrak{F}^*}(G)\leq M$ for every $\mathfrak{F}^*$-maximal subgroup $M$ of $G$.
\end{proof}

\section{Proof of Theorem \ref{t1}}

Assume that $\mathfrak{F}$ is a hereditary saturated formation containing all nilpotent groups with the canonical local definition $F$. Then $F(p)$ is a hereditary formation for all primes $p$ (see \cite [IV, 3.14]{s8}).

Suppose that   $\mathrm{Int}_\mathfrak{F}(G)=\mathrm{Z}_\mathfrak{F}(G)$ holds for every group $G$.    Let show that  $\mathrm{Int}_{\mathfrak{F}^*}(G)=\mathrm{Z}_{\mathfrak{F}^*}(G)$ also holds for every group $G$.


Let $D$ be the intersection of all $\mathfrak{F}^*$-maximal   subgroups of $G$ and let $H/K$ be a chief factor of $G$ below $D$.

$(1)$ \emph{If $H/K$ is abelian then $MC_G(H/K)/C_G(H/K) \in F(p)$ for every $\mathfrak{F}^*$-maximal subgroup $M$ of $G$.}

If $H/K$ is abelian then it is an elementary abelian $p$-group for some prime $p$ and $H/K\in\mathfrak{F}$. Let $M$ be a $\mathfrak{F}^*$-maximal subgroup of $G$ and $K=H_0\triangleleft H_1\triangleleft\dots\triangleleft H_n=H$ be a part of a chief series of $M$. If $H_i/H_{i-1}$ is $\mathfrak{F}$-eccentric for some $i$ then every element of $M$ induces an inner automorphism on $H_i/H_{i-1}$. So $M/C_M(H_i/H_{i-1})\simeq 1\in F(p)$. Therefore $H_i/H_{i-1}$ is an $\mathfrak{F}$-central chief factor of $M$, a contradiction. Hence $H_i/H_{i-1}$ is an $\mathfrak{F}$-central chief factor of $M$ for all $i=1,\dots, n$. So $M/C_M(H_i/H_{i-1})\in F(p)$ by Lemma \ref{l1} for all $i=1,\dots, n$.
Therefore $M/C_M(H/K)\in\mathfrak{N}_pF(p)=F(p)$ by \cite[Lemma 1]{lvv}. Now $MC_G(H/K)/C_G(H/K)\simeq M/C_M(H/K)\in F(p)$ for every $\mathfrak{F}^*$-maximal subgroup $M$ of $G$.

$(2)$ \emph{If $H/K\in\mathfrak{F}$ is non-abelian then $MC_G(H/K)/C_G(H/K) \in F(p)$ for every $p\in\pi(H/K)$ and every $\mathfrak{F}^*$-maximal subgroup $M$ of $G$.}

If $H/K\in\mathfrak{F}$ is non-abelian then it is a direct product of isomorphic non-abelian simple $\mathfrak{F}$-groups. Let $M$ be a $\mathfrak{F}^*$-maximal subgroup of $G$. By Lemma \ref{l3} $H/K=H_1/K\times\dots\times H_n/K$ is a direct product of minimal normal subgroups $H_i/K$  of $M/K$. From $H_i/K\in \mathfrak{F}$ it follows that $(H_i/K)/\mathrm{O}_{p', p}(H_i/K)\simeq H_i/K\in F(p)$ for all $p\in\pi(H_i/K)$ and all $i=1,\dots, n$.
Assume that $H_i/K$ is a $\mathfrak{F}$-eccentric chief factor of $M/K$  for  some $i$. It means that every element of $M$ induces an inner automorphism on $H_i/K$. So $M/C_M(H_i/K)\simeq H_i/K\in F(p)$, a contradiction.

Now $H_i/K$ is $\mathfrak{F}$-central in $M/K$ for all $i=1,\dots, n$. Therefore $M/C_M(H_i/K)\in F(p)$ for all $p\in\pi(H_i/K)$ by Lemma \ref{l1}. Note that $C_M(H/K)=\cap_{i=1}^n C_M(H_i/K)$. Since $F(p)$ is a formation,    $M/\cap_{i=1}^n C_M(H_i/K)= M/C_M(H/K)\in F(p)$ for all $p\in\pi(H/K)$. It means that $MC_G(H/K)/C_G(H/K)\simeq M/C_M(H/K)\in F(p)$ for every $p\in\pi(H/K)$ and every $\mathfrak{F}^*$-maximal subgroup $M$ of $G$.

$(3)$ \emph{If $H/K\in\mathfrak{F}$ then all $\mathfrak{F}$-subgroups of $G/C_G(H/K)$ are $F(p)$-groups.}

Let $Q/C_G(H/K)$ be a  $\mathfrak{F}$-maximal subgroup of $G/C_G(H/K)$. Then there exists a  $\mathfrak{F}$-maximal subgroup $N$ of $G$ with $NC_G(H/K)/C_G(H/K)=Q/C_G(H/K)$ by \cite[1,  5.7]{s5}. From $\mathfrak{F}\subseteq\mathfrak{F}^*$ it follows that there exists a $\mathfrak{F}^*$-maximal subgroup $L$ of $G$ with $N\leq L$. So  $Q/C_G(H/K)\leq LC_G(H/K)/C_G(H/K)\in F(p)$ by $(1)$ and $(2)$. Since $F(p)$ is hereditary,  $Q/C_G(H/K)\in F(p)$. It means that all $\mathfrak{F}$-maximal subgroups of $G/C_G(H/K)$ are $F(p)$-groups. Hence all $\mathfrak{F}$-subgroups of $G/C_G(H/K)$ are $F(p)$-groups.

$(4)$ \emph{If $H/K\in\mathfrak{F}$   then it is $\mathfrak{F}$-central in $G$.}

Assume now that $H/K$ is not  $\mathfrak{F}$-central in $G$. So $G/C_G(H/K)\not\in F(p)$ for some $p\in\pi(H/K)$ by Lemma \ref{l1}. It means that    $G/C_G(H/K)$ contains a $s$-critical for $F(p)$ subgroup  $S/C_G(H/K)$ for some $p\in\pi(H/K)$. Since   $\mathrm{Int}_\mathfrak{F}(G)=\mathrm{Z}_\mathfrak{F}(G)$ holds for every group $G$,    $S/C_G(H/K)\in\mathfrak{F}$ by Theorem \ref{t0}. Therefore $S/C_G(H/K)\in F(p)$ by $(4)$, the contradiction. Thus $H/K$ is    $\mathfrak{F}$-central in $G$.

$(5)$ \emph{If $H/K\not\in\mathfrak{F}$ is non-abelian then every element of $G$ induces an inner automorphism on it.}

    Let $M$ be a $\mathfrak{F}^*$-maximal subgroup of $G$. By Lemma \ref{l3} $H/K=H_1/K\times\dots\times H_n/K$ is a direct product of minimal normal subgroups $H_i/K$  of $M/K$. Since $H_i/K\not\in\mathfrak{F}$ for all $i=1,\dots, n$, it is an $\mathfrak{F}$-eccentric chief factor of $M$ for all $i=1,\dots, n$. So every element of $M$ induces an inner automorphism on $H_i/K$ for all $i=1,\dots, n$. It means that for all  $a\in M$ and for all $i=1,\dots, n$ there is  $x(a, i)\in H_i/K$ and  $(H_i/K)^a=(H_i/K)^{x(a, i)}$. So for all $a\in M$ there is $x(a)=x(a, 1)\dots x(a, n)\in H/K$  such that $(H/K)^a=(H_1/K)^a\times\dots\times (H_n/K)^a=(H_1/K)^{x(a, 1)}\times\dots\times (H_n/K)^{x(a, n)}=(H_1/K)^{x(a)}\times\dots\times (H_n/K)^{x(a)}=(H/K)^{x(a)}$.

    It means that for every $\mathfrak{F}^*$-maximal subgroup $M$ of $G$ every element of $M$ induces an inner automorphism on $H/K$. Since $\mathfrak{N}\subseteq\mathfrak{F}$, $\langle x\rangle\in\mathfrak{F}$ for every $x\in G$. From $\mathfrak{F}\subseteq\mathfrak{F}^*$ it follows that for every element $x$  of $G$ there is a $\mathfrak{F}^*$-maximal subgroup $M$ of $G$ with $x\in M$. Thus  every element   of $G$ induces an inner automorphism on $H/K$.

$(7)$ \emph{The final step.}

If $H/K\in\mathfrak{F}$ then from $\mathfrak{F}\subseteq\mathfrak{F}^*$ and $(4)$ it follows that $H/K$  is $\mathfrak{F}^*$-central in $G$. Assume that $H/K\not\in\mathfrak{F}$. By $(5)$ every element of $G$ induces an inner automorphism on it. Hence $H/K$ is a simple non-abelian group.  Since $G/C_G(H/K)$ is isomorphic to some subgroup of $Inn(H/K)$, we see that $G/C_G(H/K)\simeq H/K$. It is straightforward to check that $H/K\leftthreetimes G/C_G(H/K)\simeq H/K\times H/K$. From $H/K\in\mathfrak{F}^*$ it follows that $H/K\leftthreetimes G/C_G(H/K)\in\mathfrak{F}^*$. Hence $H/K$  is $\mathfrak{F}^*$-central in $G$.

Thus every chief factor of $G$ below $D$ is  $\mathfrak{F}^*$-central in $G$. Hence $D\leq \mathrm{Z}_{\mathfrak{F}^*}(G)$. According to Lemma \ref{(a)} $\mathrm{Z}_{\mathfrak{F}^*}(G)\leq D$. Therefore $\mathrm{Z}_{\mathfrak{F}^*}(G)=D$.

\medskip

 Suppose that   $\mathrm{Int}_{\mathfrak{F}^*}(G)=\mathrm{Z}_{\mathfrak{F}^*}(G)$ holds for every group $G$.      Let show that  $\mathrm{Int}_\mathfrak{F}(G)=\mathrm{Z}_\mathfrak{F}(G)$ also holds for every group $G$.   Assume the contrary. Then there exists a $s$-critical for $F(p)$ group $G\not\in\mathfrak{F}$ for some $p\in\mathbb{P}$ by Theorem \ref{t0}. We may assume that $G$ is a minimal group with this property. Then $\mathrm{O}_p(G)=\Phi(G)=1$ and $G$ has an unique minimal normal subgroup  by \cite[Lemma 2.10]{h4}. Note that $G$ is also  $s$-critical for $\mathfrak{F}$.

Assume that $G\not\in\mathfrak{F}^*$. Then there exists a simple $\mathbb{F}_pG$-module $V$ which is faithful for $G$ by \cite[10.3B]{s8}. Let $T=V\leftthreetimes G$. Note that $T\not\in\mathfrak{F}^*$.  Let $M$ be a maximal subgroup of $T$. If $V\leq M$ then $M=M\cap VG=V(M\cap G)$ where $M\cap G$ is a maximal subgroup of $G$. From $M\cap G\in F(p)$ and $F(p)=\mathfrak{N}_pF(p)$ it follows that $V(M\cap G)=M\in F(p)\subseteq\mathfrak{F}\subseteq\mathfrak{F}^*$. Hence $M$ is an $\mathfrak{F}^*$-maximal   subgroup of $G$. If $V\not\leq M$ then $M\simeq T/V\simeq G\not\in\mathfrak{F}$. Now it is clear that the sets of all maximal $\mathfrak{F}$-subgroups and all $\mathfrak{F}^*$-maximal subgroups of $T$ coincide. Therefore $V$ is the intersection of all   $\mathfrak{F}^*$-maximal subgroups of $T$. From $T\simeq V\leftthreetimes T/C_T(V)\not\in\mathfrak{F}^*$ it follows that $V\not\leq\mathrm{Z}_{\mathfrak{F}^*}(T)$, a contradiction.

Assume that $G\in\mathfrak{F}^*$. Let $N$ be a minimal normal subgroup of $G$. If $N<G$ then $N\in\mathfrak{F}$. As follows from $(4)$ $N$ is an $\mathfrak{F}$-central chief factor of $G$. So $N\leq\mathrm{Z}_{\mathfrak{F}}(G)$. Since $N$ is an unique minimal normal subgroup of       $s$-critical for  $\mathfrak{F}$-group $G$ and $\Phi(G)=1$, we see that $G/N\in\mathfrak{F}$. Hence $G\in\mathfrak{F}$, a contradiction. Thus $N=G$ is a simple group. From $\mathfrak{N}\subseteq\mathfrak{F}$ it follows that $G$ is non-abelian. 

Let $p\in\pi(G)$. According to \cite{20} there is a
 simple Frattini $\mathbb{F}_pG$-module $A$ which is faithful for $G$. By known Gasch\"{u}tz
 theorem \cite{41}, there
exists a Frattini extension  $A\rightarrowtail R\twoheadrightarrow G$
such that $A\stackrel {G}{\simeq} \Phi(R)$ and $R/\Phi(R)\simeq G$. Since $\Phi(R)\leftthreetimes R/C_R(\Phi(R))\simeq A\leftthreetimes G\not\in\mathfrak{F}$, we see that $R\not\in\mathfrak{F}^*$. Let $M$ be a maximal subgroup of $R$. Then $M/\Phi(R)$ is isomorphic to a maximal subgroup of $G$. So  $M/\Phi(R)\in F(p)$. From $\mathfrak{N}_pF(p)=F(p)$ it follows  that $M\in F(p)\subseteq\mathfrak{F}\subseteq\mathfrak{F}^*$. Hence the sets of maximal and $\mathfrak{F}^*$-maximal subgroups of $R$ coincide.  Thus $\Phi(R)=\mathrm{Z}_{\mathfrak{F}^*}(R)$. From $R/\mathrm{Z}_{\mathfrak{F}^*}(R)\simeq G\in\mathfrak{F}^*$ it follows that $R\in\mathfrak{F}^*$, the final contradiction.

\end{document}